\def\subjclass#1{{\renewcommand{\thefootnote}{}%
\footnote{\hspace{-0.6cm}\emph{Mathematics Subject Classification
(2020):} #1}}} 
\def\subj#1{{\renewcommand{\thefootnote}{}%
\footnote{\hspace{-0.6cm}\emph{keywords:} #1}}}
\newtheorem{thm}{Theorem}[section]
\newtheorem{lem}[thm]{Lemma}
\newtheorem{mainthm}[thm]{Main Theorem}
\theoremstyle{definition}
\newtheorem{defin}[thm]{Definition}
\newtheorem{exa}[thm]{Example}
\newtheorem{algo}[thm]{Algorithm}
\numberwithin{equation}{section}
\begin{document}


\baselineskip=17pt


\title{\bf\textsc{On the Complexity of $p$-adic continued fractions of rational number}}

\author{Rafik BELHADEF and Henri-Alex ESBELIN}

\date{}

\maketitle

\subjclass{11D68, 11A55, 11D88} \subj{Rational number; $p$-adic
number; continued fractions}


\begin{abstract}

In this paper, we study the complexity of $p$-adic continued fractions of a rational number, which is the p-adic analogue of the theorem of Lam\'{e}. We calculate the length of Browkin expansion, and the length of Schneider expansion. Also, some numerical examples have been given.

\end{abstract}


\section{Introduction}

It is well known that, in the real case, the sequence of partial quotients of continued fraction expansion of rational number is finite. where the length of this sequence is calculated by Lame's method.  In the p-adic case, tow definitions of the continued fractions are given: the first one is the definition of Schneider [10], and the second is the definition of Ruban [9] modified by Brwokin [4]. For the expansion of rational number, Browkin [5] asked the following question: can every rational number $\alpha\in \mathbb{Q}$ be written as a finite continued fraction? If the answer is "not", determine the infinite continued fractions which correspond to rational numbers.

Based on Schneider definition, Bundschuh [6] proved that every rational number has a stationary continued fraction expansion. For Ruban definition, Laohakosol [8] and de Weger [7] proved that every rational number has a finite or periodic continued fraction expansion. Finally, Browkin based on his definition, proved in [4] that every rational number has a finite continued fraction expansion.\\
In [2] we have studied the complexity of the $p$-adic expansion of a rational number. In the present paper, we give the $p$-adic version of Lam\'{e}'s theorem, i.e. a boundary of the length of finit expansion of Browkin (see theorem 3.1), we will also give the length of the non-stationary part of the expansion of Schneider (see theorem 3.2).


\section{Definitions and properties}

To state our results, we will recall some definitions and basic facts for $p$-adic numbers and continued fractions. 

\begin{defin}
Throughout this paper $p$ is a prime number, $\mathbb{Q}$ is the field of rational numbers,
$\mathbb{Q}^*$ is the field of nonzero rational numbers and $\mathbb{R}$ is the field of real numbers. 
We use $\left|.\right|$
to denote the ordinary absolute value, $v_p$ the $p$-adic valuation,
$\left|.\right|_p$ the $p$-adic absolute value. The field of
$p$-adic numbers $\mathbb{Q}_{p}$ is the completion of $\mathbb{Q}$
with respect to the $p$-adic absolute value. Every element of $\mathbb{Q}_{p}$ can be expressed uniquely by the $p$-adic expansion $\overset{+\infty }{
\underset{n=-j}{\sum }}\alpha _{n}p^{n}$ with $\alpha _{i}\in
\{0,1,..,p-1\}$ for $i\geq -j$. In $\mathbb{Z}_{p}$ we have simply
$j=0$.

From the $p$-adic expansion of a number $x$ we can define another expansion by $x=\overset{+\infty }{\underset{n=-j}{\sum }}\alpha' _{n}p^{n}$ with $\alpha' _{i}\in
\{\dfrac{p-1}{2},..-1,0,1,.., \dfrac{p-1}{2}\}$ for $i\geq -j$ (to get this new expansion just take $\alpha' _{i}=\alpha _{i}-\dfrac{p-1}{2}$).
In this case, the $p$-adic fractional part is defined by 
\begin{equation}
\left\langle x\right\rangle _{p}=\underset{-j\leq k\leq 0}{\sum } \alpha'_{k}p^{k}=\frac{\alpha' _{-j}}{p^{j}}+\frac{\alpha' _{-j+1}}{p^{j-1}}+......+
\frac{\alpha' _{-1}}{p}+\alpha' _{0}  \label{partie fract}
\end{equation}
and the integer part is defined by
$\left[ x\right] _{p}=\underset{k=1}{\overset{+\infty }{\sum }} \alpha'_{k}p^{k}$. We have  $x=\left[ x\right] _{p}+\left\langle x\right\rangle
_{p}$.

\end{defin}


\begin{defin}\textbf{(Browkin continued fraction)} 
From a sequence $(a_i)_{i\in N}$ with values in $\mathbb{Q}^*$, we define a sequence of homographic functions of a field $\Bbbk=\mathbb{R}$ 
or $\mathbb{Q}_{p}$,  by $\left[ a,x\right]=a+\frac{1}{x}$ and $\left[a_0;a_1,a_2,...,a_n,x\right] =\left[a_0;a_1,a_2,...,a_{n-1},\left[
a_n,x\right] \right] $.
 We call $\left[a_0;a_1,a_2,...,a_{i}\right]$ the $i-th$ convergent of this
sequence.
\end{defin}

A matrix of the homographic function $\displaystyle a_k+\frac{1}{z}$
is $\left(\begin{array}{cc}a_k & 1\\ 1& 0 \\  \end{array}\right)$. Hence, a matrix of the homographic function  $[a_0;a_1,a_2, ...,
a_k,z]$ is 
$$\left(\begin{array}{cc}a_0 & 1\\ 1& 0 \\
\end{array}\right)\left(\begin{array}{cc}a_1 & 1\\ 1& 0 \\
\end{array}\right)...\left(\begin{array}{cc}a_k & 1\\ 1& 0 \\
\end{array}\right)$$
Let us denote it
$\left(\begin{array}{cc}p_k&p_{k-1}\\q_k&q_{k-1}\\
\end{array}\right)$. We have
$$ a_0 + \frac{1}{\displaystyle a_1 + \frac{1}{\displaystyle a_2 +... \frac{1}{\displaystyle a_k+\frac{y}{x}}}}=\left[a_0;a_1,a_2, ..., a_k,\frac{x}{y}\right]=\frac{p_kx +p_{k-1}y}{q_kx +q_{k-1}y}$$

\noindent and  $p_{-1}=1$, $p_0=a_0$, $p_{i+2}=a_{i+2}p_{i+1}+p_i$
and $q_{-1}=0$, $q_0=1$, $q_{i+2}=a_{i+2}q_{i+1}+q_i$  and
$$a_0 + \frac{1}{\displaystyle a_1 + \frac{1}{\displaystyle a_2 +... \frac{1}{a_k}}} = [a_0;a_1,a_2, ..., a_{k-1}, a_k] =\frac{p_k}{q_k}$$

This definition needs the $q_k$ to be nonzero.
We mention here that Browkin [4,5] considered the partial quotients $a_{1+j}$ in $
\mathbb{Z}\left[ \frac{1}{p}\right] \cap \left(
-\frac{p}{2},\frac{p}{2}\right) $ such that $\left\vert
a_0\right\vert _{p}=1$ and $\left\vert a_j\right\vert _p > 1$ for
all $j$ in $\mathbb{N}^*$.

For more general criterion of the convergence of Browkin continued fractions, see for example [3], [4] and [5].\\
In the folowing, we give an algorithm for calculating the partial quotients of the $p$-adic continued fractions of a rational number.

\begin{algo}
Let $r=\dfrac{a}{b}\in \mathbb{Q}$, given by the expansion in (2.1)
\begin{equation*}
r=\frac{u_{0}}{p^{k_{0}}}+\frac{u_{1}}{p^{k_{0}-1}}
+...+u_{k_{0}}+u_{k_{0}+1}p+u_{k_{0}+2}p^{2}+... 
\end{equation*}%
with $u _{i}\in
\{\dfrac{p-1}{2},..-1,0,1,.., \dfrac{p-1}{2}\}$, and $k_{0}=v_{p}(r)$. We can write $r$ in the form $r=\dfrac{\alpha }{\beta p^{k_{0}}}$, with $\alpha \in \mathbb{N}$ , $\beta \in \mathbb{N}^{\ast}$, and $(\alpha ,p)=1 $, $(\beta ,p)=1 $. We put
\begin{equation}
x_{0}=u_{0}+u_{1}p+u_{2}p^{2}+...+u_{k_{0}}p^{k_{0}}
\end{equation}%

Then the fractional part of $r$ is given by
$\left\langle r\right\rangle _{p}=\frac{u_{0}}{p^{k_{0}}}+\frac{u_{1}}{
p^{k_{0}-1}}+...+u_{k_{0}}=\frac{x_{0}}{p^{k_{0}}}$. \\
We put again $a_{0}=\left\langle r\right\rangle _{p}=\frac{x_{0}}{p^{k_{0}}}\in \mathbb{Z}\left[ 
\dfrac{1}{p}\right] \cap \left] -\dfrac{p}{2},\dfrac{p}{2}\right[$.
On the other hand, we have the estimate $\alpha -x_{0}\beta =0\text{mod}p^{k_{0}+1}\Longleftrightarrow \dfrac{%
\alpha -x_{0}\beta }{p^{k_{0}}}=0\text{mod}p$. We put $\dfrac{\alpha -x_{0}\beta }{p^{k_{0}}}=\beta _{1}p^{k_{1}}%
$, with  $(p, \beta _{1})=1$ , and $k_{1}=v_{p}\left( \dfrac{\alpha -x_{0}\beta 
}{p^{k_{0}}}\right) \in \mathbb{N}^{\ast }.$ \newline

Now we calculate $r_{1}$ 
\begin{equation*}
\left\{ 
\begin{array}{l}
r_{1}=\dfrac{1}{r-a_{0}}=\dfrac{1}{\dfrac{\alpha }{\beta p^{k_{0}}}-\dfrac{%
x_{0}}{p^{k_{0}}}}=\dfrac{\beta }{\beta _{1}p^{k_{1}}} \\ 
\\ 
x_{1}=\beta\beta _{1}^{-1}\text{mod}p^{1+k_{1}}
\\ 
\\ 
a_{1}=\left\langle r_{1}\right\rangle _{p}=\dfrac{x_{1}}{p^{k_{1}}}\in \mathbb{Z}\left[ 
\dfrac{1}{p}\right] \cap \left] -\dfrac{p}{2},\dfrac{p}{2}\right[ \\ 
\end{array}
\right.
\end{equation*}

Next, we search $k_{2}=v_{p}\left( \dfrac{\beta _{0}-x_{1}\beta _{1}}{
p^{k_{1}}}\right)$. Together, we calculate $r_{2}$ by the formula 
$r_{2}=\dfrac{1}{r_{1}-a_{1}}=\dfrac{\beta _{1}}{\beta _{2}p^{k_{2}}}$,  with $\beta _{2}=\dfrac{\beta _{0}-x_{1}\beta _{1}}{p^{k_{1}}p^{k_{2}}}$.\\

So, the general forms are as follows
\begin{equation*}
\left\{ 
\begin{array}{l}
\beta _{0}=\beta ,\beta _{-1}=\alpha \text{ \ \ \ \ \ and\ \ \ \ \ \ \ }
k_{n}=v_{p}\left( \dfrac{\beta _{n-2}-x_{n-1}\beta _{n-1}}{p^{k_{n-1}}}
\right)  \\ 
\\ 
\beta _{n}=\dfrac{\beta _{n-2}-x_{n-1}\beta _{n-1}}{p^{k_{n-1}}p^{k_{n}}}\text{ \ \ \ \ and \ \ \ \ } r_{n}=\dfrac{\beta _{n-1}}{\beta _{n}p^{k_{n}}}
\\ 
\\ 
x_{n}= \beta _{n-1}\beta _{n}^{-1}\text{mod}p^{1+k_{n}}\text{ \ \ \ \ and \ \ \ \ }
a_{n}=\left\langle r_{n}\right\rangle _{p}=\dfrac{x_{n}}{p^{k_{n}}}\in \mathbb{Z}\left[ \dfrac{1}{p}\right] \cap \left] -\dfrac{p}{
2},\dfrac{p}{2}\right[ 
\end{array}%
\right.
\end{equation*}
\end{algo}

According to Browkin this algorithm is finite. It means that there exists a rank $N$ such that for all $n\geq N$ we have $x_{n+1}= \beta _{n+1}= r_{n+1}=0$.\\
To prove the main theorem 3.1, we need the following lemma:

\begin{lem}
Let a linear recurrent sequence $\left( \theta _{i}\right) _{i\in\mathbb{N}}$ defined by
\begin{equation*}
\theta _{i+1}=\frac{1}{2}\theta _{i}+\frac{1}{p^{2}}\theta _{i-1},  \text{ \ \  for all \ \\ } \  i\geq 2, \text{ \ \ \ \  and \ \ \ } \ \theta
_{0}=\left\vert \beta _{0}\right\vert ,\ \theta _{1}=\left\vert \beta
_{1}\right\vert .\newline
\end{equation*}%
the following two assertions are verified:\newline
1) $\forall i\geq 2:$ \ $\left\vert \beta _{i}\right\vert \leq \theta _{i}.$
\newline
2) $\underset{i\rightarrow +\infty }{lim}\theta _{i}=0.$
\end{lem}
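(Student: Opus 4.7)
The plan is to prove part (1) by induction on $i$, using the explicit formula for $\beta_n$ given in the algorithm, and then to prove part (2) by solving the characteristic equation of the linear recurrence and checking that both roots have modulus strictly less than $1$.

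For part (1), I would start from the algorithmic identity
$$\beta_n = \frac{\beta_{n-2}-x_{n-1}\beta_{n-1}}{p^{k_{n-1}}p^{k_n}},$$
which gives by the triangle inequality
$$|\beta_n|\leq \frac{|\beta_{n-2}|}{p^{k_{n-1}+k_n}} + \frac{|x_{n-1}|}{p^{k_{n-1}+k_n}}\,|\beta_{n-1}|.$$
Since $a_{n-1} = x_{n-1}/p^{k_{n-1}} \in \left]-p/2,p/2\right[$, one has $|x_{n-1}| < \tfrac{p}{2}\,p^{k_{n-1}}$, so the coefficient of $|\beta_{n-1}|$ is bounded by $\tfrac{p}{2p^{k_n}}$. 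Using $k_n\geq 1$ and $k_{n-1}+k_n\geq 2$ (valid for all $n\geq 1$ per Browkin's conditions), these two terms are at most $\tfrac{1}{2}|\beta_{n-1}|$ and $\tfrac{1}{p^2}|\beta_{n-2}|$ respectively. An induction from the trivial base cases $|\beta_0|=\theta_0$ and $|\beta_1|=\theta_1$ then propagates the inequality $|\beta_i|\leq \theta_i$ through the matching linear recurrence that defines $\theta_i$.

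For part (2), the sequence $(\theta_i)$ is a real linear recurrence of order $2$ with characteristic polynomial $X^2 - \tfrac{1}{2}X - \tfrac{1}{p^2}$. Its roots are
$$\lambda_{\pm}=\frac{1\pm\sqrt{1+16/p^2}}{4}.$$
I would verify that $|\lambda_{\pm}|<1$ for every prime $p\geq 2$: the dominant root $\lambda_+$ satisfies $\lambda_+<1$ iff $\sqrt{1+16/p^2}<3$ iff $p^2>2$, which holds, and $|\lambda_-|<\lambda_+<1$ is immediate. Writing $\theta_i = A\lambda_+^i + B\lambda_-^i$ with constants determined from $\theta_0,\theta_1$, it follows that $\theta_i\to 0$ as $i\to\infty$.

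The only genuinely delicate point is the estimate $|x_{n-1}|<\tfrac{p}{2}p^{k_{n-1}}$ and the systematic use of the lower bound $k_n\geq 1$: these together exactly produce the two coefficients $\tfrac{1}{2}$ and $\tfrac{1}{p^2}$ appearing in the recurrence, which is the entire content of the lemma. Once that observation is made, both assertions reduce to routine induction and standard linear-recurrence asymptotics.
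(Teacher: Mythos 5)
Your proposal is correct and takes essentially the same route as the paper: part (1) uses the identical bound $|x_{n-1}|<\tfrac{1}{2}p^{1+k_{n-1}}$ together with $k_n\geq 1$ to obtain the coefficients $\tfrac12$ and $\tfrac{1}{p^2}$ and then induction, and part (2) is the same characteristic-root computation, your roots $\frac{1\pm\sqrt{1+16/p^2}}{4}$ being exactly the paper's $\lambda_{1,2}=\frac{p\pm\sqrt{p^2+16}}{4p}$, both of modulus less than $1$.
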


\begin{proof}
\textbf{1)} For all $n\geq 0$, we have
$$\left\vert x_{n}\right\vert = \left\vert u'_{0}+u_{1}p+u'_{2}p^{2}+...+u'_{k_{n}}p^{k_{n}}\right\vert \leq \dfrac{p-1}{2} (1+p+...+p^{k_{n}}) \leq \dfrac{p^{1+k_{n}}-1}{2}<\dfrac{p^{1+k_{n}}}{2}$$ then%
\begin{equation}
\left\vert \beta _{n+1}\right\vert \leq \frac{1}{p^{k_{n}}p^{k_{n+1}}}\left(
\left\vert \beta _{n-1}\right\vert +\left\vert x_{n}\right\vert \left\vert
\beta _{n}\right\vert \right) <\frac{1}{p^{2}}\left\vert \beta
_{n-1}\right\vert +\dfrac{1}{2}\left\vert \beta _{n}\right\vert \text{ }
\end{equation}
Now, we prove by induction that $\forall i\geq 2:$ \ $\left\vert \beta _{i}\right\vert \leq \theta _{i}$.\\
For $i=2$ we have $\left\vert \beta _{2}\right\vert \leq \dfrac{1}{p^{2}}\left\vert \beta
_{0}\right\vert +\dfrac{1}{2}\left\vert \beta _{1}\right\vert %
<\theta _{2}$. Suppose that $\left\vert \beta _{i}\right\vert \leq \theta _{i}$, and we show that $\left\vert \beta _{i+1}\right\vert \leq \theta _{i+1}$. Indeed, we have%
\begin{equation*}
\left\vert \beta _{i+1}\right\vert <\frac{1}{p^{2}}\left\vert \beta
_{i-1}\right\vert +\dfrac{1}{2}\left\vert \beta _{i}\right\vert <\frac{1}{%
p^{2}}\theta _{i-1}+\dfrac{1}{2}\theta _{i}=\theta _{i+1}
\end{equation*}%
then the assertion is true for $i+1$, so is true for all $\forall i\geq 2.$\newline
\textbf{2)} The characteristic polynomial of $\left( \theta
_{i}\right) _{i\in \mathbb{N}}$ is given by $2p^{2}X^{2}-p^{2}X-2=0$. this polynomial admits two real roots
\begin{equation*}
\lambda _{1}=\frac{p+\sqrt{p^{2}+16}}{4p}\text{ \ \ \ \ \ \ \ and \ \ \ \ }
\lambda _{2}=\frac{p-\sqrt{p^{2}+16}}{4p},
\end{equation*}%
therefore, the general term of the sequence is given by
\begin{equation}
\theta _{i}=\left( \frac{\left\vert \beta _{1}\right\vert -\lambda
_{2}\left\vert \beta _{0}\right\vert }{\lambda _{1}-\lambda _{2}}\right)
\lambda _{1}^{i}+\left( \frac{\left\vert \beta _{1}\right\vert +\lambda
_{1}\left\vert \beta _{0}\right\vert }{\lambda _{1}-\lambda _{2}}\right)
\lambda _{2}^{i}
\end{equation}%
so, $\underset{i\longrightarrow +\infty }{lim}\theta _{i}=0$, because $
0<\lambda _{1}<1$ and $\ \dfrac{-1}{2}<\lambda _{2}<0.$
\end{proof}


\begin{defin}\textbf{(Schneinder continued fraction)} 
From a sequence $((a_i,\alpha_i))_{i\in \mathbb{N}}$ with values in $\{1,...,p-1\}\times \mathbb{N}^*$, 
we define a sequence of homographic functions of a field $\mathbb{Q}_{p}$, for $p$ an odd prime number,   
by $\left[(a,\alpha);x\right] =a+\frac{p^\alpha}{x}$ and 
$$\left[(a_0,\alpha_0), (a_1,\alpha_1),...,(a_n,\alpha_n);x\right] =\left[(a_0,\alpha_0), ...,(a_{n-1},\alpha_{n-1});\left[ (a_n,\alpha_n);x\right] \right] $$
We call $\left[(a_0,\alpha_0), (a_1,\alpha_1),...,(a_{n-1},\alpha_{n-1});a_n\right]$ the $n-th$ convergent of this
sequence.
\end{defin}
A matrix of the homographic function $\displaystyle a_k+\frac{p^{\alpha_k}}{x}$
is $\left(\begin{array}{cc}a_k & p^{\alpha_k}\\ 1& 0 \\  \end{array}\right)$. Hence, a matrix of the homographic function  $\left[(a_0,\alpha_0), (a_1,\alpha_1),...,(a_n,\alpha_n);x\right] $ is 
$$\left(\begin{array}{cc}a_0 & p^{\alpha_0}\\ 1& 0 \\
\end{array}\right)\left(\begin{array}{cc}a_1 & p^{\alpha_1}\\ 1& 0 \\
\end{array}\right)...\left(\begin{array}{cc}a_n & p^{\alpha_n}\\ 1& 0 \\
\end{array}\right)$$
Let us denote it
$\left(\begin{array}{cc}p_n&p'_n\\q_n&q'_n\\
\end{array}\right)$. We have
$$ a_0 + \frac{p^{\alpha_0}}{\displaystyle a_1 + \frac{p^{\alpha_1}}{\displaystyle a_2 +... \frac{p^{\alpha_{n-1}}}{\displaystyle a_n+\frac{p^{\alpha_n}}{x}}}}=\left[(a_0,\alpha_0), (a_1,\alpha_1),...,(a_n,\alpha_n);x\right]=\frac{p_nx +p'_n}{q_nx +q'_n}$$

\noindent The sequences $(p_n)_{n\in \mathbb{N}}$ and  $(q_n)_{n\in \mathbb{N}}$ satisfy both the following recurrence
$$u_{n+2}=a_{n+2}u_{n+1}+u_np^{\alpha_{n+1}}$$
with $p_{-1}=1$, $p_0=a_0$ and $q_{-1}=0$, $q_0=1$.
Moreover, we have $p'_{n+1}=p_np^{\alpha_{n+1}}$  and $q'_{n+1}=q_np^{\alpha_{n+1}}$. Hence, we have

$$ a_0 + \frac{p^{\alpha_0}}{\displaystyle a_1 + \frac{p^{\alpha_1}}{\displaystyle a_2 +... \frac{p^{\alpha_{n-1}}}{ a_n}}}=\frac{p_{n-1}a_n +p'_{n-1}}{q_{n-1}a_n +q'_{n-1}}=\frac{p_n}{q_n}$$

\bigskip
For general criterion of the convergence of Schneider continued fractions see [1] and [10]. In the following, we give an algorithm for calculating the partial quotients of the Schneider continued fractions of a rational number.

\begin{algo}
\label{Algo-ratio-quot}
Let $r=$ $\dfrac{a}{b}\in \mathbb{Q}$, with $a\in \mathbb{Z}$ , $b\in \mathbb{N}^{\ast }$, and $a$, $b$, $p$ are coprime. We define a sequence $\left( y_{m}\right) _{m}$ by the following steps: \\
$\triangleright$ Put $y_{-1}=a\text{ \ , \ }y_{0}=b$.\\

$\triangleright$ Search $b_{0}\in \left\{ 0,1,2,...,p-1\right\} $
and $\alpha _{0}\in \mathbb{N}$ such that the number $y_{1}=\dfrac{a-b_{0}b}{p^{\alpha _{0}}}$
is an integer, co-prime to $y_{1}$ and $p$.\\

$\triangleright$ Search $b_{1}\in \left\{
1,2,...,p-1\right\} $ and $\alpha _{1}\in \mathbb{N}^{\ast }$ such that the number $y_{2}=\dfrac{y_{0}-b_{1}y_{1}}{p^{\alpha _{1}}}$
is an integer, co-prime to $b$ and $p$.\\

$\triangleright$ Thus, we search $b_{m}\in \left\{ 0,1,2,...,p-1\right\} $
and $\alpha _{m}\in \mathbb{N}^{\ast }$ such that the number
\begin{equation}
y_{m+1}=\dfrac{y_{m-1}-b_{m}y_{m}}{p^{\alpha _{m}}}  \label{frac2}
\end{equation}%
is an integer, co-prime to $y_{m}$ and $p$.\\

We can extract the following formula from the previous
\begin{equation*}
\frac{y_{m-1}}{y_{m}}=b_{m}+p^{\alpha _{m}}\dfrac{y_{m+1}}{y_{m}}=b_{m}+%
\dfrac{p^{\alpha _{m}}}{\dfrac{y_{m}}{y_{m+1}}}
\end{equation*}%
Hence, we have the continued fractions%
\begin{equation}
\dfrac{a}{b}=b_{0}+\dfrac{p^{\alpha _{0}}}{b_{1}+\dfrac{p^{\alpha _{1}}}{%
...b_{m}+\dfrac{p^{\alpha _{m}}}{\dfrac{y_{m}}{y_{m+1}}}}}
\end{equation}%
Using the matrix formula seen before, we obtain the following
\begin{equation}
\binom{a}{b}=\left( 
\begin{array}{cc}
b_{0} & p^{\alpha _{0}} \\ 
1 & 0%
\end{array}%
\right) \left( 
\begin{array}{cc}
b_{1} & p^{\alpha _{2}} \\ 
1 & 0%
\end{array}%
\right) ..\left( 
\begin{array}{cc}
b_{m} & p^{\alpha _{m}} \\ 
1 & 0%
\end{array}%
\right) \binom{y_{m}}{y_{m+1}}  \notag
\end{equation}%
We denote by $M_{n}$ the matrix
\begin{equation*}
\left( 
\begin{array}{cc}
b_{0} & p^{\alpha _{0}} \\ 
1 & 0%
\end{array}%
\right) \left( 
\begin{array}{cc}
b_{1} & p^{\alpha _{2}} \\ 
1 & 0%
\end{array}%
\right) ..\left( 
\begin{array}{cc}
b_{m} & p^{\alpha _{m}} \\ 
1 & 0%
\end{array}%
\right) =\left( 
\begin{array}{cc}
U_{m} & V_{m} \\ 
W_{m} & Z_{m}%
\end{array}%
\right)
\end{equation*}%
Which implies the following recurrence relation%
\begin{equation}
M_{m}=M_{m-1}\left( 
\begin{array}{cc}
b_{m} & p^{\alpha _{m}} \\ 
1 & 0%
\end{array}%
\right) =\left( 
\begin{array}{cc}
U_{m-1} & V_{m-1} \\ 
W_{m-1} & Z_{m-1}%
\end{array}%
\right) \left( 
\begin{array}{cc}
b_{m} & p^{\alpha _{m}} \\ 
1 & 0%
\end{array}%
\right)
\end{equation}%
with the initial conditions%
\begin{equation*}
M_{-1}=\left( 
\begin{array}{cc}
1 & 0 \\ 
0 & 1%
\end{array}%
\right) \text{ \ \ \ , \ \ \ \ \ }M_{0}=\left( 
\begin{array}{cc}
b_{0} & p^{\alpha _{0}} \\ 
1 & 0%
\end{array}%
\right)
\end{equation*}%
so, we have the  system
\begin{equation*}
\left\{ 
\begin{array}{c}
U_{m}=b_{m}U_{m-1}+V_{m-1}\text{ \ \ \ , \ \ \ \ }V_{m}=p^{\alpha
_{m}}U_{m-1} \\ 
\\ 
W_{m}=b_{m}W_{m-1}+Z_{m-1}\text{ \ \ \ , \ \ \ }Z_{m}=p^{\alpha _{m}}W_{m-1}%
\end{array}%
\right.
\end{equation*}%
i.e%
\begin{equation}
\left\{ 
\begin{array}{c}
U_{m}=b_{m}U_{m-1}+p^{\alpha _{m-1}}U_{m-2} \\ 
\\ 
W_{m}=b_{m}W_{m-1}+p^{\alpha _{m-1}}W_{m-2}%
\end{array}%
\right.  \label{Vecteur matric}
\end{equation}
\end{algo}


\section{Statements }
For the Brwokin continued fractions we have the following main theorem:

\begin{mainthm} \label{Brow-ratio}
Let $r=\dfrac{a}{b}\in \mathbb{Q}$, with $k_{0}=v_{p}(r)$.
Then, the set
\begin{equation*}
\left\{ n\in \mathbb{N}\diagup a_{n}=\left\langle r_{n}\right\rangle _{p}=\dfrac{x_{n}}{p^{k_{n}}}%
\right\}
\end{equation*}%
is finite, and its cardinal is increased by the real integer part
of the number
\begin{equation}
\dfrac{-1}{\log \lambda _{1}}\log \left( \dfrac{%
2\left\vert \beta _{1}\right\vert }{\lambda _{1}-\lambda _{2}}+\left\vert
\beta _{0}\right\vert \right)
\end{equation}%
with $\lambda _{1}$ , $\lambda _{2}$\ are the roots of the equation $%
2p^{2}X^{2}-p^{2}X-2=0.$\\
Where $x_{n}$, $a_{n}, r_{n}, k_{n}$ and $\beta _{n}$  are defined as in algorithm 2.3.
\end{mainthm}%

\bigskip

For the Schneider continued fractions, Bundschuh [6] proved that every rational number has a stationnary expansion, i.e.%
\begin{equation*}
\exists k\geq 0,\forall j>k:\ \alpha_{j}=1\text{ \ \ \ et \ \ \ }a_{j}=p-1.
\end{equation*}%
In the following main theorem, we will calculate the value of $k$:

\begin{mainthm} \label{Schn-ratio}
For $p\geq 3,$ let $r=\dfrac{a}{b}\in \mathbb{Q}$, given by its Schneider continued fractions
$$ a_0 + \frac{p^{\alpha_0}}{\displaystyle a_1 + \frac{p^{\alpha_1}}{\displaystyle a_2 +... \frac{p^{\alpha_{k}}}{\displaystyle a_k+\frac{p}{{\displaystyle p-1+\frac{p}{{{\displaystyle p-1+\frac{p}{...}}}}}}}}}$$
We assume that $(a_{i},\alpha _{i})=(\lambda,\alpha)\neq(p-1,1)$, for $ 0\leq i\leq k$.\\  
Then, the length of the non-stationary part is equal to 
\begin{equation*}
k=\left[ \dfrac{\ln \left\vert \theta \right\vert }{\ln \left\vert \dfrac{%
T_{2}}{T_{1}}\right\vert }\right] +1
\end{equation*}%
with $\theta =\frac{\left( T_{1}-p^{\alpha} \right) \left( a-b T_{1} \right) }{\left( T_{2}-p^{\alpha} \right) \left( a-b T_{2} \right) }$,
where $T_{1}$, $T_{2}$ \ are the roots of the equation $T^{2}-\lambda T-p^{\alpha}=0$. 

\end{mainthm}


\section{Proof of the main theorems}

\begin{proof}(Main theorem 3.1)\\
From lemma 2.4, we have $\underset{i\longrightarrow +\infty}{lim}\theta _{i}=0$. Therefore
\begin{equation}
\exists N\in \mathbb{N}:i>N\Longrightarrow \left\vert \beta _{i}\right\vert <\theta _{i}<1
\end{equation}
But $\left( \left\vert \beta _{i}\right\vert \right) _{i}$ is a sequence of positive integer numbers, so $\left\vert \beta _{i}\right\vert =0$, for all $i>N$. This shows that the algorithm for calculating $\left( a_{n}\right)
_{n\in \mathbb{N}}$ stops after the rank $N$; hence the set $\left\{
n\in \mathbb{N}\diagup a_{n}=\left\langle r_{n}\right\rangle _{p}=\dfrac{x_{n}}{p^{k_{n}}}%
\right\} $ is finite. Now, we prove that
\begin{equation*}
card\left\{ n\in \mathbb{N}\diagup a_{n}=\left\langle r_{n}\right\rangle _{p}=\dfrac{x_{n}}{p^{k_{n}}}%
\right\} \leq \left[ \dfrac{-\log \left( \dfrac{2\left\vert
\beta _{1}\right\vert }{\lambda _{1}-\lambda _{2}}+\left\vert \beta
_{0}\right\vert \right) }{\log \lambda _{1}}\right]
\end{equation*}
Indeed, from (4.1) we have for all $i>N$, $\theta _{i}<1$. On the other hand we have $\left\vert \lambda _{2}\right\vert <\lambda _{1}$ then

\begin{eqnarray*}
\left\vert \theta _{i}\right\vert &=&\left\vert \left( \frac{\left\vert
\beta _{1}\right\vert -\lambda _{2}\left\vert \beta _{0}\right\vert }{%
\lambda _{1}-\lambda _{2}}\right) \lambda _{1}^{i}+\left( \frac{\lambda
_{1}\left\vert \beta _{0}\right\vert +\left\vert \beta _{1}\right\vert }{%
\lambda _{1}-\lambda _{2}}\right) \lambda _{2}^{i}\right\vert \\
&<&\lambda _{1}^{i}\left( \frac{2\left\vert \beta _{1}\right\vert }{\lambda
_{1}-\lambda _{2}}+\left\vert \beta _{0}\right\vert \right)
\end{eqnarray*}
In order to have that $\theta _{N+1}<1$, it is sufficient to get $$\lambda _{1}^{N+1}\left( \frac{2\left\vert \beta _{1}\right\vert }{
\lambda _{1}-\lambda _{2}}+\left\vert \beta _{0}\right\vert \right) \leq 1$$
Furthermore
\begin{equation}
N+1\geq \frac{-\log \left( \dfrac{2\left\vert \beta
_{1}\right\vert }{\lambda _{1}-\lambda _{2}}+\left\vert \beta
_{0}\right\vert \right) }{\log \lambda _{1}}
\end{equation}%
So, we take
$$N=\left[ \dfrac{-\log \left( \dfrac{2\left\vert \beta
_{1}\right\vert }{\lambda _{1}-\lambda _{2}}+\left\vert \beta
_{0}\right\vert \right) }{\log \lambda _{1}}\right]$$
\end{proof}

\begin{exa}
For $p=3$, and $\frac{a}{b}=\frac{77}{18}$, we have $\beta _{-1}=\alpha =77$, $\beta _{0}=2$, $k_{0}=2$. The table 1 comes from algorithm 2.3

$$\textbf{Table 1: The $3$-adic Browkin continued fraction of $\frac{77}{18}$}$$
 \begin{equation*}
\begin{tabular}{|l|c|c|c|}
\hline
$n$ & $0$ & $1$ & $2$ \\ \hline
& \multicolumn{1}{|l|}{%
\begin{tabular}{lll}
$k_{0}$ & $\beta _{0}$ & $a_{0}$ \\ 
2 & 2 & $\frac{25}{9}$%
\end{tabular}%
} & \multicolumn{1}{|l|}{%
\begin{tabular}{lll}
$k_{1}$ & $\beta _{1}$ & $a_{1}$ \\ 
1 & 1 & $\frac{11}{3}$%
\end{tabular}%
} & \multicolumn{1}{|l|}{%
\begin{tabular}{lll}
$k_{2}$ & $\beta _{2}$ & $a_{2}$ \\ 
1 & -1 & $-\frac{1}{3}$%
\end{tabular}%
} \\ \hline
\end{tabular}
\end{equation*}
\bigskip
From theorem 3.1, we have $\lambda _{1}=\frac{2}{3}$, $\lambda _{2}=-\frac{1}{
6}$ then $k=3$. So, the Browkin continued fractions of $\frac{77}{18}$ are given by
$$\dfrac{77}{18}=\frac{25}{9}+\frac{1}{\dfrac{11}{3}+\dfrac{3}{-\dfrac{1}{3}}}$$
\end{exa}

\begin{exa}
For $p=3$, and $\frac{a}{b}=\frac{365}{54}$, we put $\beta _{-1}=\alpha
=365,$ $\beta _{0}=2,$ $k_{0}=3$. The table 2 comes from the algorithm 2.3

$$\textbf{Table 2: The $3$-adic Browkin continued fraction of $\frac{365}{54}$}$$
 \begin{equation*}
\begin{tabular}{|l|c|c|c|c|c|}
\hline
$n$ & $0$ & $1$ & $2$ & $3$ & $4$ \\ \hline
& \multicolumn{1}{|l|}{%
\begin{tabular}{lll}
$k_{0}$ & $\beta _{0}$ & $a_{0}$ \\ 
3 & 2 & $-\frac{20}{27}$%
\end{tabular}%
} & \multicolumn{1}{|l|}{%
\begin{tabular}{lll}
$k_{1}$ & $\beta _{1}$ & $a_{1}$ \\ 
1 & 5 & $\frac{4}{3}$%
\end{tabular}%
} & \multicolumn{1}{|l|}{%
\begin{tabular}{lll}
$k_{2}$ & $\beta _{2}$ & $a_{2}$ \\ 
1 & -2 & $\frac{2}{3}$%
\end{tabular}%
} & \multicolumn{1}{|l|}{%
\begin{tabular}{lll}
$k_{3}$ & $\beta _{3}$ & $a_{3}$ \\ 
1 & 1 & $\frac{7}{3}$%
\end{tabular}%
} & \multicolumn{1}{|l|}{%
\begin{tabular}{lll}
$k_{4}$ & $\beta _{4}$ & $a_{4}$ \\ 
1 & -1 & $-\frac{1}{3}$%
\end{tabular}%
} \\ \hline
\end{tabular}
\end{equation*}
\bigskip

From theorem 3.1 we have $\lambda _{1}=\frac{2}{3}$, $\lambda _{2}=-\frac{1}{6}$ then $k=6$.
So, the Browkin continued fractions of $\frac{365}{54}$ are given by

$$\dfrac{365}{54}=-\frac{20}{27}+\frac{1}{\dfrac{4}{3}+\dfrac{1}{\dfrac{2}{3}+%
\dfrac{1}{\dfrac{7}{3}+\dfrac{1}{-\dfrac{1}{3}}}}}$$

\end{exa}

\begin{exa}
For $p=5$, the expansion of $\frac{a}{b}=-\frac{1793}{100}$ with
the coefficients in $\left\{ -2,-1,0,+1,+2\right\} $ are given by
$$-\dfrac{1793}{100}=-\frac{2}{5^{2}}+\frac{2}{3}%
-2-2.5+1.5^{2}+1.5^{3}+1.5^{4}+...$$
we put $\beta _{-1}=\alpha =-1793,$ $\beta _{0}=4,$ $k_{0}=2$. The table 3 comes from the algorithm 2.3

$$\textbf{Table 3: The $5$-adic Browkin continued fraction of $\frac{1793}{100}$}$$
 \begin{equation*}
\begin{tabular}{|l|c|c|c|c|}
\hline
$n$ & $0$ & $1$ & $2$ & $3$ \\ \hline
& \multicolumn{1}{|l|}{%
\begin{tabular}{lll}
$k_{0}$ & $\beta _{0}$ & $a_{0}$ \\ 
2 & 4 & $-\frac{42}{25}$%
\end{tabular}%
} & \multicolumn{1}{|l|}{%
\begin{tabular}{lll}
$k_{1}$ & $\beta _{1}$ & $a_{1}$ \\ 
1 & 13 & $-\frac{8}{5}$%
\end{tabular}%
} & \multicolumn{1}{|l|}{%
\begin{tabular}{lll}
$k_{2}$ & $\beta _{2}$ & $a_{2}$ \\ 
1 & -4 & $-\frac{3}{5}$%
\end{tabular}%
} & \multicolumn{1}{|l|}{%
\begin{tabular}{lll}
$k_{3}$ & $\beta _{3}$ & $a_{3}$ \\ 
1 & 1 & $-\frac{4}{5}$%
\end{tabular}%
} \\ \hline
\end{tabular}
 \end{equation*}

From theorem 3.1 we have $\lambda _{1}=\frac{5+\sqrt{41}}{20}$, $\lambda
_{2}=\frac{5-\sqrt{41}}{20}$ then $k=6$.

So, the Browkin continued fractions of $-\frac{1793}{100}$ are given by

\[
-\dfrac{1793}{100}=-\dfrac{42}{25}+\frac{1}{-\dfrac{8}{5}+\dfrac{1}{-\dfrac{3%
}{5}+\dfrac{1}{-\dfrac{4}{5}}}}
\]

\end{exa}

\bigskip


\begin{proof} (Main theorem 3.2)

In the stationnay case of Schneider continued fractions of rational number, we have $\forall m\geq k:$ $\left\vert y_{m+1}\right\vert =1$, $%
b_{m+1}=p-1$, $\alpha _{m+1}=1$. If we have $(a_{i},\alpha _{i})=(p-1,1)$, for $ 0\leq i\leq k$, then $\dfrac{a}{b}=-1$.\\
In the following, we suppose $(a_{i},\alpha _{i})=(\lambda,\alpha)\neq(p-1,1)$, for $ 0\leq i\leq k$, the matrix formula becomes%
\begin{eqnarray*}
\binom{a}{b} &=&\left( 
\begin{array}{cc}
\lambda & p^{\alpha} \\ 
1 & 0%
\end{array}%
\right) \left( 
\begin{array}{cc}
\lambda & p^{\alpha} \\ 
1 & 0%
\end{array}%
\right) ..\left( 
\begin{array}{cc}
\lambda & p^{\alpha} \\ 
1 & 0%
\end{array}%
\right) \binom{+ 1}{- 1} \\
&=& \left( \begin{array}{cc}
U_{k} & V_{k} \\ 
W_{k} & Z_{k}%
\end{array}%
\right) \binom{+ 1}{- 1}
\end{eqnarray*}%
So, we find
\begin{equation}
\left\{ 
\begin{array}{l}
a=U_{k}-V_{k}=U_{k}-p^{\alpha}U_{k-1} \\ 
\\
b=W_{k}-Z_{k}=W_{k}-p^{\alpha}W_{k-1}%
\end{array}%
\right.  \label{a et b}
\end{equation}%
Thus
\begin{equation*}
\left\{ 
\begin{array}{l}
U_{n}=\lambda U_{n-1}+p^{\alpha}U_{n-2} \\
\\ 
W_{n}=\lambda W_{n-1}+p^{\alpha}W_{n-2}%
\end{array}%
\right.
\end{equation*}
which means $\left( U_{n}\right) _{n}$ and $\left( W_{n}\right) _{n}$ are two linear recurrence sequences. Their general forms are as follows%
\begin{equation*}
\left\{ 
\begin{array}{l}
U_{n}= \dfrac{1}{\sqrt{4p^{\alpha}+\lambda^2}} \left( T_{2}^{n+1}-T_{1}^{n+1}\right) \\ 
\\ 
W_{n}= \dfrac{1}{\sqrt{4p^{\alpha}+\lambda^2}} \left( T_{2}^{n}-T_{1}^{n}\right)%
\end{array}
\right.
\end{equation*}
with the first terms $\ U_{0}=1$, $U_{1}=\lambda$, $W_{0}=0$, $W_{1}=1$. Moreover, $%
T_{2}$ and $T_{1}$ are the roots of the characteristic equation: \ $
T^{2}-\lambda T-p^{\alpha}=0$, they are given by%
\begin{equation}
\left\{ 
\begin{array}{c}
T_{1}=\dfrac{\lambda-\sqrt{4p^{\alpha}+\lambda^2}}{2} \\ 
\\ 
T_{2}=\dfrac{\lambda+\sqrt{4p^{\alpha}+\lambda^2}}{2}%
\end{array}%
\right.
\end{equation}%
Back to the formula (\ref{a et b}), we obtain%
\begin{equation*}
\left\{ 
\begin{array}{l}
a=\dfrac{1}{\sqrt{4p^{\alpha}+\lambda^2}} \left(
T_{2}^{k}\left( T_{2}-p^{\alpha}\right) -T_{1}^{k}\left( T_{1}-p^{\alpha}\right) \right) \\ 
\\ 
b=\dfrac{1}{\sqrt{4p^{\alpha}+\lambda^2}} \left(
T_{2}^{k-1}\left( T_{2}-p^{\alpha}\right) -T_{1}^{k-1}\left( T_{1}-p^{\alpha}\right) \right)%
\end{array}%
\right.
\end{equation*}%
We divide the two numbers%
\begin{equation*}
\frac{a}{b}=\frac{ 
T_{2}^{k}\left( T_{2}-p^{\alpha}\right) -T_{1}^{k}\left( T_{1}-p^{\alpha}\right) 
}{ T_{2}^{k-1}\left( T_{2}-p^{\alpha}\right) -T_{1}^{k-1}\left( T_{1}-p^{\alpha}\right)
}
\end{equation*}%
then%

\begin{equation}
\frac{T_{2}^{k-1}}{T_{1}^{k-1}}=\frac{\left( T_{1}-p^{\alpha} \right) \left( a-b T_{1} \right) }{\left( T_{2}-p^{\alpha} \right) \left( a-b T_{2} \right) }
\end{equation}%
hence%
\begin{equation*}
\left( \left\vert \frac{T_{2}}{T_{1}}\right\vert \right) ^{k-1}=\left\vert 
\frac{\left( T_{1}-p^{\alpha} \right) \left( a-b T_{1} \right) }{\left( T_{2}-p^{\alpha} \right) \left( a-b T_{2} \right) }\right\vert
\end{equation*}%
finally%
\begin{equation}
k=\left[ \dfrac{\ln \left\vert \theta \right\vert }{\ln \left\vert \frac{%
T_{2}}{T_{1}}\right\vert }\right] +1
\end{equation}%
with%
$$\theta =\frac{\left( T_{1}-p^{\alpha} \right) \left( a-b T_{1} \right) }{\left( T_{2}-p^{\alpha} \right) \left( a-b T_{2} \right) }$$
it's clear that $\left\vert T_{2}\right\vert >\left\vert T_{1}\right\vert $ and $T_{1}\neq 0$, so it is necessary that $\left\vert \theta \right\vert >1$ in order that $k$ either well defined. 
However, we have the following cases:\newline
*) If $b> 1$ and $a>\ b T_{2} $ or $b T_{1} <\ a <\ b T_{2} $, then $\left\vert \theta \right\vert >1$. \newline
\newline
*) If $b<-1$ and $a<\ b T_{2} $ or $b T_{2} <\ a <\ b T_{1} $, then $\left\vert \theta \right\vert >1$.

\end{proof}


\begin{exa}

For $p=3$ and $\frac{a}{b}=\frac{2}{5}$, we have $\lambda =1$ and $\alpha
=1$. We put $y_{-1}=2,$ $y_{0}=5$. The table 4 comes from the algorithm 2.6

\newpage

$$\textbf{Table 4: The $3$-adic Schneider continued fraction of $\frac{2}{5}$}$$
\begin{equation*}
\begin{tabular}{|l|c|c|c|c|c|}
\hline
$n$ & $0$ & $1$ & $2$ & $3$ & $4$ \\ \hline
& \multicolumn{1}{|l|}{
\begin{tabular}{lll}
$b_{0}$ & $a_{0}$ & $y_{1}$ \\ 
1 & 1 & -1%
\end{tabular}%
} & \multicolumn{1}{|l|}{
\begin{tabular}{lll}
$b_{1}$ & $a_{1}$ & $y_{2}$ \\ 
1 & 1 & 2%
\end{tabular}%
} & \multicolumn{1}{|l|}{
\begin{tabular}{lll}
$b_{2}$ & $a_{2}$ & $y_{3}$ \\ 
1 & 1 & -1%
\end{tabular}%
} & \multicolumn{1}{|l|}{
\begin{tabular}{lll}
$b_{3}$ & $a_{3}$ & $y_{4}$ \\ 
1 & 1 & 1%
\end{tabular}%
} & \multicolumn{1}{|l|}{
\begin{tabular}{lll}
$b_{4}$ & $a_{4}$ & $y_{5}$ \\ 
2 & 1 & -1%
\end{tabular}%
} \\ \hline
$n$ & $5$ & $6$ & $7$ & $8$ & $9$ \\ \hline
& \multicolumn{1}{|l|}{%
\begin{tabular}{lll}
$b_{5}$ & $a_{5}$ & $y_{6}$ \\ 
2 & 1 & -1%
\end{tabular}%
} & \multicolumn{1}{|l|}{%
\begin{tabular}{lll}
$b_{6}$ & $a_{6}$ & $y_{7}$ \\ 
2 & 1 & -1%
\end{tabular}%
} & \multicolumn{1}{|l|}{%
\begin{tabular}{lll}
$b_{7}$ & $a_{7}$ & $y_{8}$ \\ 
2 & 1 & -1%
\end{tabular}%
} & \multicolumn{1}{|l|}{%
\begin{tabular}{lll}
$b_{8}$ & $a_{8}$ & $y_{9}$ \\ 
2 & 1 & -1%
\end{tabular}%
} & \multicolumn{1}{|l|}{%
\begin{tabular}{lll}
$b_{9}$ & $a_{9}$ & $y_{10}$ \\ 
2 & 1 & -1%
\end{tabular}%
} \\ \hline
\end{tabular}%
\end{equation*}

\bigskip
From theorem 3.2 we have $T_{1}=-1.303$, $T_{2}=2.303$. Then, $\theta =-5.523$ and $k=3$.
So the Schneider continued fractions of $\frac{2}{5}$ are given by
$$\frac{2}{5}=1+\frac{3}{1+\dfrac{3}{1+\dfrac{3}{1+\dfrac{3}{2+\dfrac{3}{2+%
\dfrac{3}{2+\dfrac{3}{2+...}}}}}}} $$
\end{exa}

\begin{exa}

For $p=3$, and $\frac{a}{b}=\frac{1259}{701}$ we have $\lambda =1$ and $%
\alpha =2$, we put $y_{-1}=1259,$ $y_{0}=701$. The table 5 comes from the algorithm 2.6
$$\textbf{Table 5: The $3$-adic Schneider continued fraction of $\frac{1259}{701}$}$$
\begin{equation*}
\begin{tabular}{|l|c|c|c|c|c|}
\hline
\multicolumn{1}{|l|}{$n$} & $0$ & $1$ & $2$ & $3$ & $4$ \\ \hline
\multicolumn{1}{|l|}{} & \multicolumn{1}{|l|}{%
\begin{tabular}{lll}
$b_{0}$ & $a_{0}$ & $y_{1}$ \\ 
1 & 2 & 62%
\end{tabular}%
} & \multicolumn{1}{|l|}{%
\begin{tabular}{lll}
$b_{1}$ & $a_{1}$ & $y_{2}$ \\ 
1 & 2 & 71%
\end{tabular}%
} & \multicolumn{1}{|l|}{%
\begin{tabular}{lll}
$b_{2}$ & $a_{2}$ & $y_{3}$ \\ 
1 & 2 & -1%
\end{tabular}%
} & \multicolumn{1}{|l|}{%
\begin{tabular}{lll}
$b_{3}$ & $a_{3}$ & $y_{4}$ \\ 
1 & 2 & 8%
\end{tabular}%
} & \multicolumn{1}{|l|}{%
\begin{tabular}{lll}
$b_{4}$ & $a_{4}$ & $y_{5}$ \\ 
1 & 2 & -1%
\end{tabular}%
} \\ \hline
\multicolumn{1}{|l|}{$n$} & $5$ & $6$ & $7$ & $8$ & $9$ \\ \hline
\multicolumn{1}{|l|}{} & \multicolumn{1}{|l|}{%
\begin{tabular}{lll}
$b_{5}$ & $a_{5}$ & $y_{6}$ \\ 
1 & 2 & 1%
\end{tabular}%
} & \multicolumn{1}{|l|}{%
\begin{tabular}{lll}
$b_{6}$ & $a_{6}$ & $y_{7}$ \\ 
2 & 1 & -1%
\end{tabular}%
} & \multicolumn{1}{|l|}{%
\begin{tabular}{lll}
$b_{7}$ & $a_{7}$ & $y_{8}$ \\ 
2 & 1 & -1%
\end{tabular}%
} & \multicolumn{1}{|l|}{%
\begin{tabular}{lll}
$b_{8}$ & $a_{8}$ & $y_{9}$ \\ 
2 & 1 & -1%
\end{tabular}%
} & \multicolumn{1}{|l|}{%
\begin{tabular}{lll}
$b_{9}$ & $a_{9}$ & $y_{10}$ \\ 
2 & 1 & -1%
\end{tabular}%
} \\ \hline
\end{tabular}%
\end{equation*}

\bigskip
From theorem 3.2 we have $T_{1}=-1.303$, $T_{2}=2.303$. Then $\theta =-73.736$ and $k=5$. So, the Schneider continued fractions of $\frac{1259}{701}$ are given by

$$
\dfrac{1259}{701}=1+\frac{9}{1+\dfrac{9}{1+\dfrac{9}{1+\dfrac{9}{1+\dfrac{9}{%
1+\dfrac{3}{2+\dfrac{3}{2+\dfrac{3}{2+\dfrac{3}{2+...}}}}}}}}} 
$$
\end{exa}

\begin{exa}

For $p=5$, and $\frac{a}{b}=\frac{3044}{673}$ we have $\lambda =3$ and $%
\alpha =2$, we put $y_{-1}=3044,$ $y_{0}=673$. The table 6 comes from the algorithm 2.6
$$\textbf{Table 6: The $3$-adic Schneider continued fraction of $\frac{3044}{673}$}$$
\begin{equation*}
\begin{tabular}{|l|c|c|c|c|c|}
\hline
\multicolumn{1}{|l|}{$n$} & $0$ & $1$ & $2$ & $3$ & $4$ \\ \hline
\multicolumn{1}{|l|}{} & \multicolumn{1}{|l|}{%
\begin{tabular}{lll}
$b_{0}$ & $a_{0}$ & $y_{1}$ \\ 
3 & 2 & 41%
\end{tabular}%
} & \multicolumn{1}{|l|}{%
\begin{tabular}{lll}
$b_{1}$ & $a_{1}$ & $y_{2}$ \\ 
3 & 2 & 22%
\end{tabular}%
} & \multicolumn{1}{|l|}{%
\begin{tabular}{lll}
$b_{2}$ & $a_{2}$ & $y_{3}$ \\ 
3 & 2 & -1%
\end{tabular}%
} & \multicolumn{1}{|l|}{%
\begin{tabular}{lll}
$b_{3}$ & $a_{3}$ & $y_{4}$ \\ 
3 & 2 & 1%
\end{tabular}%
} & \multicolumn{1}{|l|}{%
\begin{tabular}{lll}
$b_{4}$ & $a_{4}$ & $y_{5}$ \\ 
4 & 1 & -1%
\end{tabular}
} \\ \hline
\multicolumn{1}{|l|}{$n$} & $5$ & $6$ & $7$ & $8$ & $9$ \\ \hline
\multicolumn{1}{|l|}{} & \multicolumn{1}{|l|}{%
\begin{tabular}{lll}
$b_{5}$ & $a_{5}$ & $y_{6}$ \\ 
4 & 1 & -1%
\end{tabular}%
} & \multicolumn{1}{|l|}{%
\begin{tabular}{lll}
$b_{6}$ & $a_{6}$ & $y_{7}$ \\ 
4 & 1 & -1%
\end{tabular}%
} & \multicolumn{1}{|l|}{%
\begin{tabular}{lll}
$b_{7}$ & $a_{7}$ & $y_{8}$ \\ 
4 & 1 & -1%
\end{tabular}%
} & \multicolumn{1}{|l|}{%
\begin{tabular}{lll}
$b_{8}$ & $a_{8}$ & $y_{9}$ \\ 
4 & 1 & -1%
\end{tabular}%
} & \multicolumn{1}{|l|}{%
\begin{tabular}{lll}
$b_{9}$ & $a_{9}$ & $y_{10}$ \\ 
4 & 1 & -1%
\end{tabular}%
} \\ \hline
\end{tabular}%
\end{equation*}
\bigskip

From theorem 3.2 we have $T_{1}=-1.791$, $T_{2}=2.791$. Then $\theta =11.211$ and $k=5$. So, the Schneider continued fractions of $\frac{3044}{673}$ are given by

$$
\dfrac{3044}{673}=3+\frac{25}{3+\dfrac{25}{3+\dfrac{25}{3+\dfrac{25}{4+%
\dfrac{5}{4+\dfrac{5}{4+\dfrac{5}{4+\dfrac{5}{4+\dfrac{5}{4+...}}}}}}}}} 
$$
\end{exa}


Rafik Belhadef\\
LMPA, Jijel University, BP 98, Jijel, Algeria\\
Corresponding author. Email: belhadefrafik@univ-jijel.dz, rbelhadef@gmail.com
Henri-Alex Esbelin\\
LIMOS, Clermont Auvergne University, Aubi\`ere; France
\label{lastpage}
\end{document}